\newtheorem{theorem}{Theorem}[section]
\newtheorem{lemma}[theorem]{Lemma}
\newtheorem{proposition}[theorem]{Proposition}
\newtheorem{definition}[theorem]{Definition}
\newtheorem{remark}[theorem]{Remark}
\newcommand{\Z}{\mathbb{Z}}
\renewcommand{\ker}{\operatorname{Ker}}
\newcommand{\id}{\operatorname{id}}
\newcommand{\Sym}{\operatorname{Sym}}
\newcommand{\aut}{\operatorname{Aut}}
\newcommand{\soc}{\operatorname{Soc}}
\newcommand{\Aut}{\operatorname{Aut}}
\newcommand{\Ret}{\operatorname{Ret}}
\newcommand{\gr}{\operatorname{gr}}
\newenvironment{proof}{\par\noindent{ Proof.}}{$\qed$\par\bigskip}
\newcommand{\qed}{\enspace\vrule  height6pt  width4pt  depth2pt}
\begin{document}
\title{Simple solutions of the Yang--Baxter equation of cardinality $p^n$\thanks{The first author was partially
supported by the project PID2020-113047GB-I00/AEI/10.13039/501100011033 (Spain). }}
\author{F. Ced\'o \and J. Okni\'{n}ski
}
\date{}

\maketitle


\vspace{30pt} \noindent Keywords: Yang--Baxter equation,
set-theoretic solution,
 indecomposable solution,  simple solution, brace\\

\noindent 2010 MSC: Primary 16T25, 20B15, 20F16 \\

\begin{abstract}
 For every prime number $p$ and integer $n>1$, a simple,
involutive, non-degenerate, set-theoretic solution $(X,r)$ of the
Yang--Baxter equation of cardinality $|X|=p^n$ is constructed.
Futhermore, for every non-(square-free) positive integer $m$ which
is not a square of a prime number, a non-simple, indecomposable,
irretractable, involutive, non-degenerate, set-theoretic solution
$(X,r)$ of the Yang--Baxter equation of cardinality $|X|=m$ is
constructed. A recent question of Castelli on the existence of singular solutions of certain type is also answered affirmatively.
\end{abstract}

\section{Introduction and preliminaries} \label{prelim}

An important open problem is to find all the solutions of the Yang--Baxter equation. Drinfeld in \cite{drinfeld} suggested to study the set-theoretic solutions of the Yang--Baxter equation. Gateva-Ivanova and Van den Bergh \cite{GIVdB} and Etingof, Schedler and Soloviev \cite{ESS} introduced the class of so called involutive non-degenerate set-theoretic solutions of the Yang--Baxter equation. The study of this important class of solutions has attracted a lot of attention during the last twenty years, see for example \cite{CedoSurvey} and the references therein.

Let $X$ be a non-empty set and  let  $r:X\times X \longrightarrow
X\times X$ be a map. For $x,y\in X$ we put $r(x,y) =(\sigma_x (y),
\gamma_y (x))$. Recall that $(X,r)$ is an involutive,
non-degenerate set-theoretic solution of the Yang--Baxter equation
if $r^2=\id$, all the maps $\sigma_x$ and $\gamma_y$ are bijective
maps from $X$ to itself and
  $$r_{12} r_{23} r_{12} =r_{23} r_{12} r_{23},$$
where $r_{12}=r\times \id_X$ and $r_{23}=\id_X\times \ r$ are maps
from $X^3$ to itself. Because $r^{2}=\id$, one easily verifies
that $\gamma_y(x)=\sigma^{-1}_{\sigma_x(y)}(x)$, for all $x,y\in
X$ (see for example \cite[Proposition~1.6]{ESS}).

\bigskip
\noindent {\bf Convention.} Throughout the paper a solution of the
YBE will mean an involutive, non-degenerate, set-theoretic
solution of the Yang--Baxter equation. \\

It is well known, see for example \cite[Proposition
8.2]{CedoSurvey}, that a map $r(x,y) = (\sigma_x (y),
\sigma^{-1}_{\sigma_x(y)}(x))$, defined for a collection of
bijections $\sigma_x, x\in X$, of the set $X$, is a solution of
the YBE if and only if
$\sigma_{x}\sigma_{\sigma_{x}^{-1}(y)}=\sigma_{y}\sigma_{\sigma_{y}^{-1}(x)}$
for all $x,y\in X$ and the maps $\gamma_y\colon X\rightarrow X$
defined by $\gamma_y(x)=\sigma^{-1}_{\sigma_x(y)}(x)$, for all
$x,y\in X$, are bijective. Furthermore, if $X$ is finite, then
$(X,r)$ is a solution of the YBE if and only if
$\sigma_{x}\sigma_{\sigma_{x}^{-1}(y)}=\sigma_{y}\sigma_{\sigma_{y}^{-1}(x)}$
    for all $x,y\in X$.

 To study solutions of the YBE, Rump \cite{R07} introduced a new algebraic structure called left brace. This allowed to construct many new families of solutions of the YBE \cite{BCJ,BCJO,BCV,CJOComm,GI18,Smok}.

A left brace is a set $B$ with two binary operations, $+$ and
$\circ$, such that $(B,+)$ is an abelian group (the additive group
of $B$), $(B,\circ)$ is a group (the multiplicative group of $B$),
and for every $a,b,c\in B$,
 \begin{eqnarray} \label{braceeq}
  a\circ (b+c)+a&=&a\circ b+a\circ c.
 \end{eqnarray}
In any left brace $B$  the neutral elements $0,1$ for the
operations $+$ and $\circ$ coincide. Moreover, there is an action
$\lambda\colon (B,\circ)\longrightarrow \aut(B,+)$, called the
lambda map of $B$, defined by $\lambda(a)=\lambda_a$ and
$\lambda_{a}(b)=-a+a\circ b$, for $a,b\in B$. We shall write
$a\circ b=ab$ and $a^{-1}$ will denote the inverse of $a$ for the
operation $\circ$, for all $a,b\in B$. A trivial brace is a left
brace $B$ such that $ab=a+b$, for all $a,b\in B$, i.e. all
$\lambda_a=\id$. The socle of a left brace $B$ is
$$\soc(B)=\{ a\in B\mid ab=a+b, \mbox{ for all
}b\in B \}.$$ Note that $\soc(B)=\ker(\lambda)$, and thus it is a
normal subgroup of the multiplicative group of $B$. The solution of
the YBE associated to a left brace $B$ is $(B,r_B)$, where
$r_B(a,b)=(\lambda_a(b),\lambda_{\lambda_a(b)}^{-1}(a))$, for all
$a,b\in B$ (see \cite[Lemma~2]{CJOComm}).

A left ideal of a left brace $B$ is a subgroup $L$ of the additive
group of $B$ such that $\lambda_a(b)\in L$, for all $b\in L$ and
all $a\in B$. An ideal of a left brace $B$ is a normal subgroup
$I$ of the multiplicative group of $B$ such that $\lambda_a(b)\in
I$, for all $b\in I$ and all $a\in B$. Note that
\begin{eqnarray}\label{addmult1}
ab^{-1}&=&a-\lambda_{ab^{-1}}(b)
\end{eqnarray}
 for all $a,b\in B$, and
    \begin{eqnarray} \label{addmult2}
     &&a-b=a+\lambda_{b}(b^{-1})= a\lambda_{a^{-1}}(\lambda_b(b^{-1}))= a\lambda_{a^{-1}b}(b^{-1}),
     \end{eqnarray}
for all $a,b\in B$. Hence, every left ideal $L$ of $B$ also is a
subgroup of the multiplicative group of $B$, and every  ideal $I$
of a left brace $B$ also is a subgroup of the additive group of
$B$. For example,  every Sylow subgroup of the additive group of a
finite left brace $B$ is a left ideal of $B$. Consequently, if a
Sylow subgroup of the multiplicative group $(B,\circ)$ is normal,
then it must be an ideal of the finite left brace $B$. It is also
known that $\soc(B)$ is an ideal of the left brace $B$ (see
\cite[Proposition~7]{R07}). Note that, for every ideal $I$ of $B$,
$B/I$ inherits a natural left brace structure. A nonzero left
brace $B$ is simple if $\{0\}$ and $B$ are the only ideals of $B$.

A homomorphism of left braces is a map $f\colon B_1\longrightarrow
B_2$, where $B_1,B_2$ are left braces, such that $f(a b)=f(a)
f(b)$ and $f(a+b)=f(a)+f(b)$, for all $a,b\in B_1$. Note that the
kernel $\ker(f)$ of a homomorphism of left braces $f\colon
B_1\longrightarrow B_2$ is an ideal of $B_1$.

Recall that if $(X,r)$ is a solution of the YBE,
with
$r(x,y)=(\sigma_x(y),\gamma_y(x))$, then
its structure group
$$G(X,r)=\gr(x\in X\mid xy=\sigma_x(y)\gamma_y(x)\mbox{
for all }x,y\in
X)$$
has a natural structure of a left brace such that
$\lambda_x(y)=\sigma_x(y)$, for all $x,y\in X$. The additive group
of $G(X,r)$ is the free abelian group with basis $X$.
The permutation group $\mathcal{G}(X,r)=\gr(\sigma_x\mid x\in X)$ of
$(X,r)$ is a subgroup of the symmetric group $\Sym_X$ on $X$.
The
map $x\mapsto \sigma_x$, from $X$ to $\mathcal{G}(X,r)$ extends to
a group homomorphism $\phi: G(X,r)\longrightarrow
\mathcal{G}(X,r)$.
and $\ker(\phi)=\soc(G(X,r))$. Hence there is a
unique structure of a left brace on $\mathcal{G}(X,r)$ such that
$\phi$ is a homomorphism of left braces; this is the natural
structure of a left brace on $\mathcal{G}(X,r)$. In particular,
\begin{equation*}
    \sigma_{\sigma_x(y)}=\phi(\sigma_x(y))=\phi(\lambda_x(y))=\lambda_{\phi(x)}(\phi(y))=\lambda_{\sigma_x}(\sigma_y)
\end{equation*}
for all $x,y\in X$.

Let $(X,r)$ and $(Y,s)$ be solutions of the YBE. We write
$r(x,y)=(\sigma_x(y),\gamma_y(x))$ and
$s(t,z)=(\sigma'_t(z),\gamma'_z(t))$, for all $x,y\in X$ and $t,z\in
Y$. A homomorphism of solutions $f\colon (X,r)\longrightarrow (Y,s)$
is a map $f\colon X\longrightarrow Y$ such that
$f(\sigma_x(y))=\sigma'_{f(x)}(f(y))$ and
$f(\gamma_y(x))=\gamma'_{f(y)}(f(x))$, for all $x,y\in X$. Since
$\gamma_y(x)=\sigma^{-1}_{\sigma_x(y)}(x)$ and
$\gamma'_z(t)=(\sigma')^{-1}_{\sigma'_t(z)}(t)$, it is clear that
$f$ is a homomorphism of solutions if and only if
$f(\sigma_x(y))=\sigma'_{f(x)}(f(y))$, for all $x,y\in X$.

In \cite{ESS}, Etingof, Schedler and Soloviev introduced the
retract relation on solutions  $(X,r)$ of the YBE. This is the
binary relation $\sim$ on $X$ defined by $x\sim y$ if and only if
$\sigma_x=\sigma_y$. Then, $\sim$ is an equivalence relation and
$r$ induces a solution $\overline{r}$ on the set
$\overline{X}=X/{\sim}$. The retract of the solution $(X,r)$ is
$\Ret(X,r)=(\overline{X},\overline{r})$. Note that the natural map
$f\colon X\longrightarrow \overline{X}:x\mapsto \bar x$ is an
epimorphism of solutions from $(X,r)$ onto $\Ret(X,r)$.

Recall that a solution $(X,r)$ is  said to be irretractable if
$\sigma_x\neq \sigma_y$ for all distinct elements $x,y\in X$, that
is $(X,r)=\Ret(X,r)$; otherwise the solution $(X,r)$ is
retractable.  Define $\Ret^1(X,r)=\Ret(X,r)$ and, for every
integer $n>1$, $\Ret^{n}(X,r)=\Ret(\Ret^{n-1}(X,r))$. A solution
$(X,r)$ is said to be a multipermutation solution if there exists
a positive integer $n$ such that $\Ret^n(X,r)$ has cardinality
$1$.

Let $(X,r)$ be a solution of the YBE. We say that $(X,r)$ is
indecomposable if $\mathcal{G}(X,r)$ acts transitively on $X$.

\begin{definition}
    A solution $(X,r)$ of the YBE is simple if $|X|>1$ and for every
    epimorphism $f:(X,r) \longrightarrow (Y,s)$ of solutions either $f$
    is an isomorphism or $|Y|=1$.
\end{definition}

In this context, the following result (Proposition~4.1 in
\cite{CO21} and Lemma~3.4 in \cite{CO22}) is crucial.

\begin{lemma}  \label{simple indec}
Assume that $(X,r)$ is a simple solution of the YBE. Then it is
indecomposable if $|X|>2$ and it is irretractable if $|X|$ is not
a prime number.
\end{lemma}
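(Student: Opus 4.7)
The plan is to prove both statements by contradiction, in each case using the hypothesis to produce an epimorphism of solutions from $(X,r)$ onto a solution of cardinality strictly between $1$ and $|X|$, which contradicts simplicity. The two quotients I will use are both obtained from $\mathcal{G}(X,r)$-invariant equivalence relations on $X$, which makes compatibility with the maps $\sigma_x$ easy to check.

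For indecomposability, assume $|X|>2$ and that $(X,r)$ is decomposable; pick one $\mathcal{G}(X,r)$-orbit $A\subsetneq X$ (necessarily non-empty and proper) and define $f\colon X\to \{1,2\}$ by $f(A)=1$ and $f(X\setminus A)=2$. Equip $\{1,2\}$ with the flip solution $s(i,j)=(j,i)$, so that $\sigma'_1=\sigma'_2=\id$. Since every $\sigma_x$ lies in $\mathcal{G}(X,r)$ and therefore preserves each orbit, I get $f(\sigma_x(y))=f(y)=\sigma'_{f(x)}(f(y))$, so $f$ is a homomorphism of solutions. It is surjective because both parts of the partition are non-empty, and it is not injective because $|X|>2$, so its existence contradicts simplicity.

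For irretractability, assume $|X|$ is not prime. Combined with $|X|>1$ (forced by simplicity) this gives $|X|\geq 4$, hence $|X|>2$, and the first part already ensures $(X,r)$ is indecomposable. Suppose now that $(X,r)$ is retractable; the canonical epimorphism $X\to X/{\sim}$ onto $\Ret(X,r)$ has target of cardinality $<|X|$, so simplicity forces $|X/{\sim}|=1$, i.e.\ all the $\sigma_x$ coincide with a single permutation $\sigma$. Indecomposability then gives that $\langle\sigma\rangle=\mathcal{G}(X,r)$ acts transitively on $X$, so $\sigma$ is an $n$-cycle where $n=|X|$. Identifying $X$ with $\Z/n\Z$ and $\sigma$ with the shift $y\mapsto y+1$, for any proper divisor $1<d<n$ (which exists because $n$ is composite) the projection $\pi\colon\Z/n\Z\to\Z/d\Z$ paired with the shift $\sigma'\colon i\mapsto i+1$ on $\Z/d\Z$ is an epimorphism of solutions onto the permutational solution of size $d$, and $1<d<|X|$ contradicts simplicity. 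The main thing to watch is that each constructed quotient really is a solution in the sense of the paper, but this is immediate in both cases, since on the targets all the $\sigma'$'s coincide and the identity $\sigma_x\sigma_{\sigma_x^{-1}(y)}=\sigma_y\sigma_{\sigma_y^{-1}(x)}$ collapses to a tautology.
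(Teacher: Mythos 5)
Your proof is correct, and it follows essentially the same route as the sources the paper cites for this lemma ([CO21, Proposition 4.1] and [CO22, Lemma 3.4], no proof being given in the paper itself): collapse a proper orbit to get an epimorphism onto the two-element trivial solution, and in the retractable case use simplicity to force all $\sigma_x$ equal, so that indecomposability makes $(X,r)$ the cyclic shift on $\Z/(n)$, which for composite $n$ maps onto the shift solution on $\Z/(d)$, contradicting simplicity. Both reductions are legitimate since, as recalled in the preliminaries, a map $f$ with $f(\sigma_x(y))=\sigma'_{f(x)}(f(y))$ is automatically a homomorphism of solutions and the finite targets you use are genuine solutions.
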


Finite simple solutions of the YBE were introduced by Vendramin in \cite{V}. His definition does not coincide with the above definition of simplicity, but for finite indecomposable solutions both definitions coincide by \cite[Proposition 2]{CCP}.
It is not known whether there exists a simple solution of the YBE of cardinality $m^2n$ for any integers $m,n>1$. This was claimed in \cite[Theorem 4.12]{CO21}, but the proof was  incorrect, though an example of a simple solution of non-square cardinality was given, see \cite{Corrigendum}. On the other hand, all the simple solutions of the YBE of non-prime cardinality constructed in \cite{CO21,CO22} have square cardinality.
In \cite{Cast} Castelli gives an example of a simple solution of the YBE of cardinality $27$, and two concrete simple solutions of the YBE of cardinality $12$. In \cite[Theorem 5.3]{CO24} it is proven that if
$n>1$ is an integer and $p$ is a prime divisor of $q-1$ for every
prime divisor $q$ of $n$, then there exists a simple solution of
the YBE of cardinality $p^2n$.
In this paper we construct, for every prime number $p$ and every positive integer $n$, a simple solution of the YBE of cardinality $p^n$, see Theorem~\ref{main}.
Moreover, for every non-(square-free) positive integer $m$ which
is not a square of a prime number, a non-simple, indecomposable and
irretractable solution
$(X,r)$ of the YBE of cardinality $|X|=m$ is
constructed in Section~\ref{mixed}. Finally, simple singular solutions (introduced by Rump in \cite{Rump23}) are constructed in Section~\ref{singular}. This answers a recent question of Castelli \cite{Castelli} affirmatively.

\section{Solutions of cardinality $m^2n$}\label{mixed}

By \cite[Theorem 4.5]{CO23} we know that if $(X,r)$ is a finite
indecomposable and irretractable solution, then $|X|$ is not
square-free. On the other hand Dietzel, Properzi and Trappeniers
have proven in \cite[page 17]{DPT} that if $|X|=p^2$ for a prime
$p$, then $(X,r)$ is indecomposable and irretractable if and only
if $(X,r)$ is simple. In this section we will show that if $|X|$
is not square-free and is not the square of a prime, then there
exists an indecomposable and irretractable solution $(X,r)$ of the
YBE which is not simple.

\begin{remark}
    {\rm Let $m>1$ be an integer which is not a prime number.
    Using the results of \cite{CO21}, we shall construct an indecomposable and
    irretractable solution of the YBE of cardinality $m^2$ which is not simple.
    Since $m$ is not prime, there exist integers $m_1,m_2>1$ such that $m=m_1m_2$.
    Let $X=(\Z/(m))^2$. Consider the family $(j_a)_{a\in\Z/(m)}$ of elements of
    $\Z/(m)$ such that $j_0=1$ and $j_a=m_1+1$, for all $a\in\Z/(m)\setminus\{ 0\}$.
    For every $(a_1,a_2)\in X$, let $\sigma_{(a_1,a_2)}\colon X\longrightarrow X$ be
    the map defined by
    $$\sigma_{(a_1,a_2)}(x,y)=(x+a_2, y-j_{x+a_2-a_1}),$$
    for all $(x,y)\in X$. Let $r\colon X\times X\longrightarrow X\times X$ be the map
    defined by
     $$r((a_1,a_2),(c_1,c_2))=(\sigma_{(a_1,a_2)}(c_1,c_2),\sigma^{-1}_{\sigma_{(a_1,a_2)}(c_1,c_2)}(a_1,a_2)),$$
     for all $(a_1,a_2)(c_1,c_2)\in X$. By \cite[Theorem 4.9]{CO21},
     $(X,r)$ is an indecomposable and irretractable solution of the YBE. Let $\pi\colon \Z/(m)\longrightarrow \Z/(m_1)$
     be the natural homomorphism. Consider the solution $(\Z/(m_1), s)$ of the YBE,
     where $$s(x,y)=(y-1,x+1),$$
     for all $x,y\in\Z/(m_1)$. Let $f\colon X\longrightarrow \Z/(m_1)$ be the map
     defined by $f(a_1,a_2)=\pi(a_2)$. Note that
     $$f(\sigma_{(a_1,a_2)}(c_1,c_2))=f(c_1+a_2,c_2-j_{c_1+a_2-a_1})=\pi(c_2-j_{c_1+a_2-a_1})=\pi(c_2)-1$$
     and $s(f(a_1,a_2),f(c_1c_2))=s(\pi(a_2),\pi(c_2))=(\pi(c_2)-1,\pi(a_2)+1)$.
     Hence $f$ is a homomorphism of solutions from $(X,r)$ to $(\Z/(m_1),s)$.
     Clearly $f$ is surjective. Hence the solution $(X,r)$ is not simple. In the proof of \cite[Proposition 6.2]{CO21}, one can see that $\mathcal{G}(X,r)$ is isomorphic to $B/\soc(B)$, where $B$ is some asymmetric product (\cite{CCS}) of the trivial braces $(\Z/(m))^m$ and $\Z/(m)$.}
\end{remark}

\begin{theorem}
    Let $m,n>1$ be integers.
    Let $X=\Z /(m)\times \Z/(n)\times \Z/(m)$. Consider $\sigma_{(a,b,c)}\in \Sym_X$ defined by
    $$\sigma_{(a,b,c)}(x,y,z)=(x-c,y-1,z-\delta_{(a,b),(x-c,y-1)})$$
    for all $a,c,x,z\in\Z/(m)$ and $b,y\in\Z/(n)$. Let $r\colon X\times X\longrightarrow X\times X$ be the map defined by
    $$r((a,b,c),(x,y,z))=(\sigma_{(a,b,c)}(x,y,z),\sigma^{-1}_{\sigma_{(a,b,c)}(x,y,z)}(a,b,c))$$
    for all $(a,b,c),(x,y,z)\in X$. Then $(X,r)$ is an indecomposable and irretractable solution of the YBE.
    Furthermore the map $f\colon X\longrightarrow \Z/(n)$ defined by $f(a,b,c)=b$ for all $(a,b,c)\in X$
    is an epimorphism of solutions from $(X,r)$ to $(\Z/(n),s)$, where $s(i,j)=(j-1,i+1)$ for all $i,j\in\Z/(n)$.
\end{theorem}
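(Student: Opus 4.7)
The plan is to verify the four claims in sequence, using throughout the finite-$X$ criterion from the introduction: once all $\sigma_u$ are bijections, it suffices for $(X,r)$ to be a solution of the YBE that $\sigma_u\sigma_{\sigma_u^{-1}(v)}=\sigma_v\sigma_{\sigma_v^{-1}(u)}$ for all $u,v\in X$. A direct substitution yields the inverse
\[
\sigma_{(a,b,c)}^{-1}(x,y,z)=(x+c,\,y+1,\,z+\delta_{(a,b),(x,y)}),
\]
so each $\sigma_{(a,b,c)}$ is a permutation of $X$.

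For the YBE identity, set $u=(a,b,c)$, $v=(d,e,g)$, and $\beta=\delta_{(a,b),(d,e)}$, so that $\sigma_u^{-1}(v)=(d+c,e+1,g+\beta)$ and $\sigma_v^{-1}(u)=(a+g,b+1,c+\beta)$. Evaluating both compositions on an arbitrary point $(x,y,z)$, the first two coordinates agree on each side, equal to $(x-c-g-\beta,\,y-2)$. The third coordinate on each side is $z$ diminished by a sum of two Kronecker deltas, one coming from the inner application of $\sigma$ and one from the outer. A careful expansion shows that the inner delta on the left-hand side equals the outer delta on the right-hand side, and vice versa, so the two sums agree in $\Z/(m)$. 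This delta-matching is the main obstacle and the only step requiring non-trivial bookkeeping.

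Indecomposability follows by tracing the orbit of $(0,0,0)$: applying $\sigma_{(a,b,c)}$ to $(x,y,z)$, the new first coordinate $x-c$ is arbitrary by choice of $c$, the second is $y-1$, and the third becomes either $z$ or $z-1$ according as $(a,b)$ is chosen different from or equal to $(x-c,y-1)$, a freedom guaranteed by $m>1$. Iterating a number of steps taken to be a multiple of $n$ and at least $m$ returns $y$ to its original value while allowing the third coordinate to realize every element of $\Z/(m)$; thus the orbit is all of $X$. For irretractability, if $\sigma_{(a,b,c)}=\sigma_{(a',b',c')}$ then equality on first coordinates forces $c=c'$, and evaluating the third coordinate at $(x,y,z)=(a+c,b+1,0)$ pins down $(a,b)=(a',b')$. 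Finally, $f(a,b,c)=b$ is a homomorphism of solutions onto $(\Z/(n),s)$, because the second coordinate of $\sigma_{(a,b,c)}(x,y,z)$ is $y-1=\sigma'_b(y)$ independently of the remaining data; surjectivity is evident, and $(\Z/(n),s)$ is straightforwardly a solution in its own right.
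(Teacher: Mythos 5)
Your proposal is correct and follows essentially the same route as the paper: explicit computation of $\sigma_{(a,b,c)}^{-1}$, direct verification of the finite-case YBE criterion by matching the two Kronecker deltas crosswise (you check $\sigma_u\sigma_{\sigma_u^{-1}(v)}=\sigma_v\sigma_{\sigma_v^{-1}(u)}$ while the paper checks the equivalent inverted identity), a direct orbit computation for indecomposability, evaluation at a suitable point to force $(a,b,c)=(a',b',c')$ for irretractability, and the immediate check that $f(a,b,c)=b$ intertwines the actions. The only differences are cosmetic: your indecomposability argument pads with a multiple of $n$ steps of length at least $m$ instead of the paper's explicit chains of $\sigma^{-1}$ applications, but both are the same orbit-tracing idea.
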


\begin{proof}
    Let $(a,b,c),(x,y,z)\in X$. Note that
    $$\sigma_{(a,b,c)}^{-1}(x,y,z)=(x+c,y+1,z+\delta_{(a,b),(x,y)}).$$
    To prove that $(X,r)$ is a solution of the YBE, since $X$ is finite, it is enough to check that
    \begin{equation}\label{cycle}
    \sigma^{-1}_{\sigma^{-1}_{(a,b,c)}(x,y,z)}\sigma^{-1}_{(a,b,c)}=
    \sigma^{-1}_{\sigma^{-1}_{(x,y,z)}(a,b,c)}\sigma^{-1}_{(x,y,z)}.\end{equation}
    For all $u,w\in \mathbb{Z}/(m)$ and $v\in \mathbb{Z}/(n)$ we have that
    \begin{align*}
    \sigma^{-1}_{\sigma^{-1}_{(a,b,c)}(x,y,z)}\sigma^{-1}_{(a,b,c)}(u,v,w)
    &=\sigma^{-1}_{(x+c,y+1,z+\delta_{(a,b),(x,y)})}(u+c,v+1,w+\delta_{(a,b),(u,v)})\\
    &=(u+c+z+\delta_{(a,b),(x,y)},v+2,w+\delta_{(a,b),(u,v)}+\delta_{(x+c,y+1),(u+c,v+1)})\\
    &=(u+c+z+\delta_{(a,b),(x,y)},v+2,w+\delta_{(a,b),(u,v)}+\delta_{(x,y),(u,v)})
    \end{align*}
    and
    \begin{align*}
        \sigma^{-1}_{\sigma^{-1}_{(x,y,z)}(a,b,c)}\sigma^{-1}_{(x,y,z)}(u,v,w)
        &=\sigma^{-1}_{(a+z,b+1,c+\delta_{(x,y),(a,b)})}(u+z,v+1,w+\delta_{(x,y),(u,v)})\\
        &=(u+z+c+\delta_{(x,y),(a,b)},v+2,w+\delta_{(x,y),(u,v)}+\delta_{(a+z,b+1),(u+z,v+1)})\\
        &=(u+c+z+\delta_{(a,b),(x,y)},v+2,w+\delta_{(a,b),(u,v)}+\delta_{(x,y),(u,v)}).
    \end{align*}
    Hence (\ref{cycle}) follows and therefore $(X,r)$ is a solution of the YBE.
     Note that
     $$\sigma^{-1}_{(1,i,0)}\sigma^{-1}_{(1,i-1,0)}\cdots \sigma^{-1}_{(1,1,0)}(0,0,0)=(0,i,0)$$
     for all $i\in\Z/(n)$. Hence $(0,i,0)$ is in the orbit of $(0,0,0)$
     by the action of $\mathcal{G}(X,r)$, for all $i\in\Z/(n)$. Note that
     $$\sigma^{-1}_{(1,i,1)}(0,i-1,0)=(1,i,0)$$
     for all $i\in \Z/(n)$. Hence $(1,i,0)$ is in the orbit of $(0,0,0)$ by the action of $\mathcal{G}(X,r)$, for all $i\in\Z/(n)$. Since
     $$\sigma^{-1}_{(j+1,i,1)}(j,i-1,0)=(j+1,i,0)$$
     for all $j\in \Z/(m)$ and $i\in \Z/(n)$, we have that
     $(j,i,0)$ is in the orbit of $(0,0,0)$ by the action of $\mathcal{G}(X,r)$, for all $j\in\Z/(m)$ and $i\in\Z/(n)$.
     We also have that
     $$\sigma^{-1}_{(j,i,0)}(j,i,k)=(j,i+1,k+1)$$
     for all $j,k\in\Z/(m)$ and $i\in \Z/(n)$. Hence $\mathcal{G}(X,r)$ acts transitively on $X$. Therefore $(X,r)$ is indecomposable.

     Let $(a,b,c),(a',b',c')\in X$ be elements such that $\sigma_{(a,b,c)}=\sigma_{(a',b',c')}$. Hence
     $$(x+c,y+1,z+\delta_{(a,b),(x,y)})=\sigma^{-1}_{(a,b,c)}(x,y,z)=\sigma^{-1}_{(a',b',c')}(x,y,z)=(x+c',y+1,z+\delta_{(a',b'),(x,y)})$$
     for all $(x,y,z)\in X$. Thus $c=c'$ and
     $$1=\delta_{(a,b),(a,b)}=\delta_{(a',b'),(a,b)},$$
     and therefore $(a,b,c)=(a',b',c')$. Hence $(X,r)$ is irretractable.

     Finally, it is easy to check that the map $f\colon X\longrightarrow \Z/(n)$, defined by $f(a,b,c)=b$,
     is an epimorphism of solutions from $(X,r)$ to $(\Z/(n), s)$. Therefore the result follows.
\end{proof}

 \section{Simple solutions of cardinality $p^{2n+1}$}\label{pn}

The example constructed in \cite[Theorem~4.12]{CO21} is not
correct. Hence, all the simple solutions of the YBE of
non-prime cardinality constructed in \cite{CO21, CO22} have square
cardinality. In \cite[Theorem 5.3]{CO24} it is proven that if
$n>1$ is an integer and $p$ is a prime divisor of $q-1$ for all
prime divisor $q$ of $n$, then there exists a simple solution of
the YBE of cardinality $p^2n$. In this section, for every prime
number $p$ and every integer $m>1$, we shall construct simple
solutions of the YBE of cardinality $p^m$.

\begin{remark}
    {\rm Let $m>1$ be an integer. Using the results from \cite{CO21},
    we shall construct a simple solution of the YBE of cardinality $m^2$.
        Let $X=(\Z/(m))^2$. Consider the family $(j_a)_{a\in\Z/(m)}$
        of elements of $\Z/(m)$ such that $j_0=1$ and $j_a=0$, for all
        $a\in\Z/(m)\setminus\{ 0\}$. For every $(a_1,a_2)\in X$, let
        $\sigma_{(a_1,a_2)}\colon X\longrightarrow X$ be the map defined by
        $$\sigma_{(a_1,a_2)}(x,y)=(x+a_2, y-j_{x+a_2-a_1}),$$
        for all $(x,y)\in X$. Let $r\colon X\times X\longrightarrow X\times X$
        be the map defined by
        $$r((a_1,a_2),(c_1,c_2))=(\sigma_{(a_1,a_2)}(c_1,c_2),\sigma^{-1}_{\sigma_{(a_1,a_2)}(c_1,c_2)}(a_1,a_2)),$$
        for all $(a_1,a_2)(c_1,c_2)\in X$.  By \cite[Theorem 4.9]{CO21},
        $(X,r)$ is a simple solution of the YBE.  In particular, for every prime number $p$ and positive integer
        $n$, there exists a simple solution of the YBE of cardinality $p^{2n}$. By \cite[Proposition 6.2]{CO21}, $\mathcal{G}(X,r)$ is isomorphic to some asymmetric product (\cite{CCS}) of the trivial braces $(\Z/(m))^m$ and $\Z/(m)$. In particular for a prime number $p$ and $m=p^n$, the permutation group $\mathcal{G}(X,r)$ is a $p$-group. }
\end{remark}

 Let $p$ be a prime number and let $n$ be a positive integer. Let $X=\Z/(p^n)\times\Z/(p)\times\Z/(p^n)$.
 Let $\pi\colon \Z/(p^n)\longrightarrow \Z/(p)$ be the canonical homomorphism. For $(a,b,c)\in X$, consider the map $\sigma_{(a,b,c)}\colon X\longrightarrow X$ defined by
 $$\sigma_{(a,b,c)}(x,y,z)=(x-c, y+\pi(a-x), z-\delta_{a,x-c}\delta_{b, y+\pi(a-x)}),$$
 for all $(x,y,z)\in X$. Note that $\sigma_{(a,b,c)}$ is bijective and
 $$\sigma^{-1}_{(a,b,c)}(x,y,z)=(x+c, y+\pi(x+c-a), z+\delta_{a,x}\delta_{b,y}),$$
 for all $(x,y,z)\in X$.

 \begin{theorem}\label{main} With the above notation, let $r\colon X^2\longrightarrow X^2$ be the map defined by
 $$r((a,b,c),(x,y,z))=(\sigma_{(a,b,c)}(x,y,z), \sigma^{-1}_{\sigma_{(a,b,c)}(x,y,z)}(a,b,c)),$$
 for all $(a,b,c),(x,y,z)\in X$. Then $(X,r)$ is a simple solution of the YBE.
\end{theorem}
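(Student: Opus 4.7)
My plan is to prove, in order, that $(X,r)$ satisfies the YBE, is indecomposable, is irretractable, and is simple. Since $X$ is finite, the YBE reduces to checking
\[
\sigma^{-1}_{\sigma^{-1}_{(a,b,c)}(x,y,z)}\sigma^{-1}_{(a,b,c)}=\sigma^{-1}_{\sigma^{-1}_{(x,y,z)}(a,b,c)}\sigma^{-1}_{(x,y,z)},
\]
which I would verify by applying both sides to an arbitrary $(u,v,w)\in X$ and comparing coordinates. The first coordinate becomes $u+c+z+\delta_{a,x}\delta_{b,y}$, symmetric in the swap $(a,b,c)\leftrightarrow(x,y,z)$ by symmetry of the Kronecker delta; the second reduces to $v+\pi(u+c-a)+\pi(u+z-x)+\delta_{a,x}\delta_{b,y}$ using that $\pi$ is a homomorphism and $\pi(1)=1$; the third reduces to $w+\delta_{a,u}\delta_{b,v}+\delta_{x,u}\delta_{y,v}$, because the ``cross'' factor $\delta_{x+c,u+c}\delta_{y+\pi(x+c-a),v+\pi(u+c-a)}$ collapses to $\delta_{x,u}\delta_{y,v}$ (once $\delta_{x,u}=1$, the second delta becomes $\delta_{y,v}$).

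For indecomposability I would show that the $\mathcal{G}(X,r)$-orbit of $(0,0,0)$ is all of $X$: iterating $\sigma^{-1}_{(0,0,0)}$ sends $(0,0,k)$ to $(0,0,k+1)$; iterating $\sigma^{-1}_{(1,0,0)}$ sends $(0,j,k)$ to $(0,j-1,k)$; and $\sigma^{-1}_{(1,b,i)}$ sends $(0,j'-\pi(i-1),k)$ to $(i,j',k)$. For irretractability, if $\sigma_{(a,b,c)}=\sigma_{(a',b',c')}$, comparing first and second coordinates yields $c=c'$ and $\pi(a)=\pi(a')$; evaluating the third coordinate at $(x,y,z)=(a+c,b+\pi(c),0)$ makes the left-hand delta equal to $1$, and the resulting identity forces $(a,b)=(a',b')$.

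The main obstacle is simplicity. I would argue by contradiction: assume $f\colon(X,r)\twoheadrightarrow(Y,s)$ is an epimorphism of solutions with $|Y|>1$ and study the equivalence relation $\sim_f$ on $X$ defined by $x\sim_f y \iff f(x)=f(y)$. Compatibility with the action in both arguments, combined with the indecomposability already established, forces all $\sim_f$-classes to have equal cardinality $p^t$ with $0\le t\le 2n+1$, and the task is to rule out $1\le t\le 2n$. The strategy is to exploit the sparsity of the $\delta$-term: for fixed $(a,b,c)$, the map $\sigma_{(a,b,c)}$ alters the third coordinate only at the single point $(a+c,b+\pi(c),\cdot)$, and compatibility of $\sim_f$ with this highly local change should propagate, via indecomposability, to a complete identification of third coordinates within every $(\cdot,\cdot)$-fiber; iterating the argument for the second and first coordinates then collapses $\sim_f$ to the full relation. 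The delicate step, and the one I expect to be the real source of difficulty, is ruling out an intermediate quotient analogous to the non-simple example of Section~\ref{mixed} in which only the third coordinate is identified; this should be excluded using that the twist $\pi(a-x)$ in the second coordinate of $\sigma_{(a,b,c)}$ couples the first and second factors in a way that the Section~\ref{mixed} construction does not.
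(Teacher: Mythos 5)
Your verification of the YBE relation and your orbit computation for indecomposability are correct and essentially the same as the paper's (the irretractability check you add is also correct, though the paper does not need it for this theorem). The problem is the simplicity part, which is the actual content of the theorem: what you give there is a strategy, not a proof. You correctly reduce to ruling out fibers of size $p^t$ with $1\le t\le 2n$, but the ``propagation'' step --- that compatibility of $\sim_f$ with the local $\delta$-perturbation forces identification of all third coordinates, then second, then first --- is precisely the hard point, and you leave it as something that ``should'' work. Note also that your guiding picture of the dangerous intermediate quotient is off: the relation identifying only the third coordinate is not even a quotient solution here (the first coordinate of $\sigma_{(a,b,c)}(x,y,z)$ is $x-c$, which depends on $c$), whereas the configurations one actually has to exclude are skewed fibers such as $f^{-1}(y_0)=\{(-c_i,0,c_i)\}$, which your local-perturbation heuristic does not see.

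Concretely, the paper's argument at this point is quantitative: writing $f^{-1}(y_0)=\{(a_i,b_i,c_i)\}$ and using that $\sigma_{(a_i,b_i,c_i)}\sigma^{-1}_{(0,0,0)}$ permutes the fiber, it first shows some $c_i$ must be invertible in $\Z/(p^n)$ (the diagonal fiber $\{(-c_i,0,c_i)\}$ being excluded by an explicit evaluation), deduces that $\{a_1,\dots\}=\Z/(p^n)$, and then splits into the cases $n=1$ and $n>1$ to force $|Y|\le p$ (for $n>1$ by producing more than $p^{2n-1}$ elements in the fiber). Finally, $|Y|=p$ is eliminated by a separate argument: an indecomposable solution of prime size is the cyclic shift on $\Z/(p)$, which has no point fixed by any $\sigma'$, while $\sigma_{(0,1,0)}(0,0,0)=(0,0,0)$; your proposal does not address this case at all (your collapse argument, if it worked, would have to cover it, but it is not established). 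As it stands, the proof of simplicity is missing.
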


\begin{proof}
To show that $(X,r)$ is a solution of the YBE it is enough to prove that
\begin{equation}\label{cycleset}
    \sigma^{-1}_{\sigma^{-1}_{(a,b,c)}(x,y,z)}\sigma^{-1}_{(a,b,c)}
    =\sigma^{-1}_{\sigma^{-1}_{(x,y,z)}(a,b,c)}\sigma^{-1}_{(x,y,z)},
    \end{equation}
    for all $(a,b,c),(x,y,z)\in X$.

Let $(a,b,c),(x,y,z),(u,v,w)\in X$. We have that
\begin{eqnarray*}
    \lefteqn{\sigma^{-1}_{\sigma^{-1}_{(a,b,c)}(x,y,z)}\sigma^{-1}_{(a,b,c)}(u,v,w)}\\
    &=&\sigma^{-1}_{(x+c, y+\pi(x+c-a), z+\delta_{a,x}\delta_{b,y})}(u+c, v+\pi(u+c-a), w+\delta_{a,u}\delta_{b,v})\\
    &=&(u+c+z+\delta_{a,x}\delta_{b,y}, v+\pi(u+c-a)+\pi(u+c+z+\delta_{a,x}\delta_{b,y}-x-c),\\
    &&\qquad w+\delta_{a,u}\delta_{b,v}+\delta_{x+c,u+c}\delta_{y+\pi(x+c-a),v+\pi(u+c-a)})\\
    &=&(u+c+z+\delta_{a,x}\delta_{b,y}, v+\pi(u-a-x+u+c+z+\delta_{a,x}\delta_{b,y}),
    w+\delta_{a,u}\delta_{b,v}+\delta_{x,u}\delta_{y+\pi(x),v+\pi(u)})
    \end{eqnarray*}
and
\begin{eqnarray*}
    \lefteqn{\sigma^{-1}_{\sigma^{-1}_{(x,y,z)}(a,b,c)}\sigma^{-1}_{(x,y,z)}(u,v,w)}\\
    &=&\sigma^{-1}_{(a+z, b+\pi(a+z-x), c+\delta_{x,a}\delta_{y,b})}(u+z, v+\pi(u+z-x), w+\delta_{x,u}\delta_{y,v})\\
    &=&(u+z+c+\delta_{x,a}\delta_{y,b}, v+\pi(u+z-x)+\pi(u+z+c+\delta_{x,a}\delta_{y,b}-a-z),\\
    &&\qquad w+\delta_{x,u}\delta_{y,v}+\delta_{a+z,u+z}\delta_{b+\pi(a+z-x),v+\pi(u+z-x)})\\
    &=&(u+z+c+\delta_{x,a}\delta_{y,b}, v+\pi(u-x-a+u+z+c+\delta_{x,a}\delta_{y,b}),
    w+\delta_{x,u}\delta_{y,v}+\delta_{a,u}\delta_{b+\pi(a),v+\pi(u)}).
\end{eqnarray*}
Note that $\delta_{x,u}\delta_{y,v}=\delta_{x,u}\delta_{y+\pi(x),v+\pi(u)}$
and $\delta_{a,u}\delta_{b+\pi(a),v+\pi(u)}=\delta_{a,u}\delta_{b,v}$. Hence
(\ref{cycleset}) holds and thus $(X,r)$ is a solution of the YBE.

We shall prove that $(X,r)$ is indecomposable. Note that
$$\sigma^{-1}_{(a,b,c)}(0,0,0)=(c, \pi(c-a), \delta_{a,0}\delta_{b,0}).$$
In particular, for $b=1$, we get that $(c,\pi(c-a),0)$ is in the
orbit of $(0,0,0)$ with respect to the action of
$\mathcal{G}(X,r)$ for all $a,c\in\Z/(p^n)$. Note that
$$\sigma^{-1}_{(c,\pi(c-a),0)}(c,\pi(c-a),i)=(c, \pi(c-a),i+1),$$
for all $a,c,i\in\Z/(p^n)$. Since $(c,\pi(c-a),0)$ is in the orbit of
$(0,0,0)$, we get that the orbit of $(0,0,0)$ is $X$. Hence
$(X,r)$ is indecomposable.

Let $f\colon (X,r)\longrightarrow (Y,s)$ be an epimorphism of
solutions of the YBE. Suppose that $f$ is not an isomorphism.
Since $(X,r)$ is indecomposable, by \cite[Lemma 3.3]{CO23},
$(Y,s)$ is indecomposable and $|f^{-1}(y)|=|f^{-1}(y')|$ for all
$y,y'\in Y$. Hence $|Y|=p^{k}$ for some $k\in\{ 0,1,\dots ,2n\}$.
First we shall prove that $k<2$.

Let $f(0,0,0)=y_0$ and
$$f^{-1}(y_0)=\{ (a_i,b_i,c_i)\mid 1\leq i\leq p^{2n+1-k}\}.$$
Assume that $(a_1,b_1,c_1)=(0,0,0)$. Note that
\begin{eqnarray*}
    \sigma_{(a_i,b_i,c_i)}\sigma^{-1}_{(0,0,0)}(a_j,b_j,c_j)&=&\sigma_{(a_i,b_i,c_i)}(a_j,b_j+\pi(a_j),c_j+\delta_{0,a_j}\delta_{0,b_j})\\
    &=&(a_j-c_i,b_j+\pi(a_i),c_j+\delta_{0,a_j}\delta_{0,b_j}-\delta_{a_i,a_j-c_i}\delta_{b_i,b_j+\pi(a_i)}),
\end{eqnarray*}
for all $i,j\in\{ 1,\dots ,p^{2n+1-k}\}$. Note that, for every
$1\leq i\leq p^{2n+1-k}$,
$$|\{ \sigma_{(a_i,b_i,c_i)}\sigma^{-1}_{(0,0,0)}(a_j,b_j,c_j)\mid 1\leq j\leq p^{2n+1-k}\}|=p^{2n+1-k}.$$
Hence, for every $1\leq i\leq p^{2n+1-k}$, since $f$ is a
homomorphism of solutions,
\begin{equation}\label{set1}
    f^{-1}(y_0)=
    \{ (a_j-c_i,b_j+\pi(a_i),c_j+\delta_{0,a_j}\delta_{0,b_j}-\delta_{a_i,a_j-c_i}\delta_{b_i,b_j+\pi(a_i)})\mid 1\leq j\leq p^{2n+1-k}\}.
    \end{equation}
We shall prove that there exists $i\in\{ 1,\dots ,p^{2n+1-k}\}$
such that $c_i$ is invertible in $\Z/(p^n)$. Suppose that $c_i\in
p(\Z/(p^n))$ for all $1\leq i\leq p^{2n+1-k}$. By (\ref{set1}),
for $j=1$ we get that
$(-c_i,\pi(a_i),\delta_{0,0}\delta_{0,0}-\delta_{a_i,-c_i}\delta_{b_i,\pi(a_i)})\in
f^{-1}(y_0)$. Thus there exists $1\leq l\leq p^{2n+1-k}$ such that
$$c_l=\delta_{0,0}\delta_{0,0}-\delta_{a_i,-c_i}\delta_{b_i,\pi(a_i)}=
1-\delta_{a_i,-c_i}\delta_{b_i,\pi(a_i)}.$$ Since $c_l\in
p(\Z/(p^n))$, we have that
$\delta_{a_i,-c_i}\delta_{b_i,\pi(a_i)}=1$. Hence  $a_i=-c_i$ and
$b_i=\pi(a_i)$, for all $1\leq i\leq p^{2n+1-k}$ and thus
$$(a_i,b_i,c_i)=(-c_i,\pi(-c_i),c_i)=(-c_i,0,c_i).$$
Hence
$$f^{-1}(y_0)=\{ (-c_i,0,c_i)\mid 1\leq i\leq p^{2n+1-k}\}.$$
In particular, $c_i=c_j$ if and only if $i=j$. Now we have,
$$y_0=f(-c_2,0,c_2)=f(\sigma_{(0,0,0)}\sigma^{-1}_{(-c_2,0,c_2)}(-c_2,0,c_2))=
f(\sigma_{(0,0,0)}(0,0,c_2+1))=f(0,0,c_2).$$ Hence $-c_2=0=c_1$,
and thus $c_2=c_1$, a contradiction, because $1<2\leq p^{2n+1-k}$.
Thus, indeed, there exists $i\in\{ 1,\dots ,p^{2n+1-k}\}$ such
that $c_i$ is invertible in $\Z/(p^n)$.

Since $c_i$ is invertible in $\Z/(p^n)$, its additive order is
$p^n$, and by (\ref{set1}),  it follows that
$$a_1-c_i,a_1-2c_i,\dots ,a_1-p^{n}c_i\in \{a_j\mid 1\leq j\leq p^{2n+1-k}\}\subseteq\Z/(p^n)$$
are $p^n$ distinct elements, and this implies that
\begin{equation}\label{groupai}\{ a_1,a_2,\dots ,a_{p^{2n+1-k}}\}=\Z/(p^n).
    \end{equation}

Suppose that $n=1$. In this case, $\pi=\id$ and $k\leq 2$. Suppose
that $k=2$. Thus $|f^{-1}(y_0)|=p$. Hence, there are $p$ different
first components of elements in
    (\ref{set1}). Moreover, for every $1\leq j\leq p$,
\begin{equation}\label{set2}
    f^{-1}(y_0)=\{ (a_j-c_i,b_j+a_i,c_j+\delta_{0,a_j}\delta_{0,b_j}-\delta_{a_i,a_j-c_i}\delta_{b_i,b_j+a_i})\mid 1\leq i\leq p\},
\end{equation}
(since there are $p$ different second components of elements
    in (\ref{set2})) and thus (by looking at the second
    components of elements in (\ref{set1}) and at the first components
    in (\ref{set2})), we get
$$\{ b_1,b_2,\dots ,b_p\}=\Z/(p)=\{ c_1,c_2,\dots,c_p\}.$$
In particular, for $j=1$, we have that
$$  f^{-1}(y_0)=\{ (-c_i,a_i,\delta_{0,0}\delta_{0,0}-\delta_{a_i,-c_i}\delta_{b_i,a_i})\mid 1\leq i\leq p\}.$$
Since
$\delta_{0,0}\delta_{0,0}-\delta_{a_i,-c_i}\delta_{b_i,a_i}\in\{
0,1\}$, it follows that $p=2$ and
$$f^{-1}(y_0)=\{ (0,0,0), (1,1,1)\}.$$
But, for $i=j=2$ in (\ref{set2}), we have that
$(1-1,1+1,1+\delta_{0,1}\delta_{0,1}-\delta_{1,0}\delta_{1,0})=(0,0,1)\in
f^{-1}(y_0)$, a contradiction. Hence $k< 2$, in this case.

Suppose that $n>1$. Let $a\in p(\Z/(p^n))$ be a nonzero element.
By (\ref{groupai}), there exists $j\in\{ 1,\dots ,p^{2n+1-k}\}$
such that $a_j=a$. We have
$$f(a_j,b_j,c_j+1)=f(\sigma^{-1}_{(a_j,b_j,0)}(a_j,b_j,c_j))=
f(\sigma^{-1}_{(a_j,b_j,0)}(0,0,0))=f(0,0,0)=y_0.$$  By induction
on $l$, one can see that
$$f(a_j,b_j,c_j+l)=y_0,$$
for all $1\leq l\leq p^n$. Hence, for every nonzero element $a\in
p(\Z/(p^n))$ there exists $j_a\in\{ 1,\dots ,p^{2n+1-k}\}$ such
that
    $$(a,b_{j_a},c)\in f^{-1}(y_0), $$
    for all $c\in\Z/(p^n)$.
In particular, $f(a,b_{j_a},0)=y_0$
and
$$\sigma_{(0,0,0)}\sigma^{-1}_{(a,b_{j_a},0)}(0,0,0)=\sigma_{(0,0,0)}(0,0,0)=
(0,0,-1)\in f^{-1}(y_0).$$ Note that
$\sigma_{(0,0,0)}\sigma^{-1}_{(a,b_{j_a},0)}(0,0,-1)=\sigma_{(0,0,0)}(0,0,-1)=
(0,0,-2)\in f^{-1}(y_0)$, and one can see that $(0,0,-l)\in
f^{-1}(y_0)$ by induction on $l$.  Hence,
$$(0,0,c)\in f^{-1}(y_0),$$
for all $c\in \Z/(p^n)$. Therefore,
$$\{ (a,b_{j_a},c)\mid a\in p(\Z/(p^n))\setminus\{ 0\},\; c\in \Z/(p^n)\}\cup\{(0,0,c)\mid c\in \Z/(p^n)\}\subseteq f^{-1}(y_0).$$
Thus $|f^{-1}(y_0)|\geq p^{n-1}p^n=p^{2n-1}$. By (\ref{groupai}),
there exists $(a_i,b_i,c_i)\in f^{-1}(y_0)$ such that
$a_i=1$. Thus
$$|f^{-1}(y_0)|>p^{2n-1}.$$
Hence, for $n>1$, we also have that $k<2$.

Therefore, either $|Y|=p$ or $|Y|=1$.

Suppose that $|Y|=p$. Since $(Y,s)$ is indecomposable, by
\cite[Theorem 2.13]{ESS} $(Y,s)$ is isomorphic to $(\Z/(p),s')$,
where $s'(x,y)=(y-1,x+1)$ for all $x,y\in\Z/(p)$. Let
$f(0,0,0)=y_0$. Let $g\colon (Y,s)\longrightarrow (\Z/(p),s')$ be
an isomorphism of solutions. We have that
$gf(\sigma_{(a,b,c)}(0,0,0))= g(y_0)-1$, for all $(a,b,c)\in X$.
But we know that
$$\sigma_{(0,1,0)}(0,0,0)=(0,0,0),$$
and thus $gf(\sigma_{(0,1,0)}(0,0,0))=gf(0,0,0)=g(y_0)$, a
contradiction. Hence $|Y|\neq p$.

Therefore $|Y|=1$ and $(X,r)$ is simple.
\end{proof}

\begin{proposition}
	Let $(X,r)$ be the simple solution descrived in Theorem \ref{main}. Then its permutation group $\mathcal{G}(X,r)$ is a $p$-group.
\end{proposition}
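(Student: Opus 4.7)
The plan is to exhibit $\mathcal{G}(X,r)$ as an extension with kernel and image both $p$-groups. I start with a structural observation: the formula defining $\sigma_{(a,b,c)}(x,y,z)$ shows that its first two components depend only on $(x,y)$, while the third component equals $z$ plus a quantity depending only on $(x,y)$; the analogous fact for $\sigma_{(a,b,c)}^{-1}$ is the displayed formula just before Theorem~\ref{main}. By induction on word length, every $\sigma\in\mathcal{G}(X,r)$ therefore acts as
\[
\sigma(x,y,z)=\bigl(F_1^\sigma(x,y),\,F_2^\sigma(x,y),\,z+G^\sigma(x,y)\bigr)
\]
for suitable functions, with $G^\sigma$ taking values in $\Z/(p^n)$. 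Setting $\phi(\sigma)(x,y)=(F_1^\sigma(x,y),F_2^\sigma(x,y))$ then defines a group homomorphism $\phi\colon \mathcal{G}(X,r)\longrightarrow \Sym_{\Z/(p^n)\times\Z/(p)}$.

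Next I would show that $\phi(\mathcal{G}(X,r))$ is a $p$-group. Its image is generated by the maps $\bar\sigma_{(a,c)}(x,y)=(x-c,\,y+\pi(a)-\pi(x))$ for $(a,c)\in (\Z/(p^n))^2$, and each of these lies in the set $H$ of transformations $\psi_{s,t,u}(x,y)=(x+s,\,y+t+u\pi(x))$ with $s\in\Z/(p^n)$ and $t,u\in \Z/(p)$; explicitly, $\bar\sigma_{(a,c)}=\psi_{-c,\,\pi(a),\,-1}$. A direct computation gives
\[
\psi_{s,t,u}\circ\psi_{s',t',u'}=\psi_{\,s+s',\, t+t'+u\pi(s'),\, u+u'},
\]
so $H$ is a group of order $p^n\cdot p\cdot p=p^{n+2}$, and hence $\phi(\mathcal{G}(X,r))\subseteq H$ is a $p$-group.

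On the other hand, any $\sigma\in\ker(\phi)$ satisfies $F_1^\sigma(x,y)=x$ and $F_2^\sigma(x,y)=y$, so it acts by $(x,y,z)\mapsto(x,y,z+G^\sigma(x,y))$. Since composition of two such maps adds the corresponding functions, $\sigma\mapsto G^\sigma$ embeds $\ker(\phi)$ as a subgroup of the abelian group of all maps $\Z/(p^n)\times\Z/(p)\to\Z/(p^n)$ under pointwise addition, which is isomorphic to $(\Z/(p^n))^{p^{n+1}}$ and is therefore a $p$-group. Consequently $\mathcal{G}(X,r)$ fits into a short exact sequence whose outer terms are both $p$-groups, and hence is itself a $p$-group. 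The step requiring most care is the inductive verification that every element of $\mathcal{G}(X,r)$ has the triangular form displayed above, so that $\phi$ and its kernel are correctly described; this is routine given the explicit formulas for $\sigma_{(a,b,c)}^{\pm 1}$.
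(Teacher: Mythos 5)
Your argument is correct, and it takes a genuinely different route from the paper. You exploit the triangular (block) structure of the action: every generator $\sigma_{(a,b,c)}^{\pm 1}$ permutes the fibers of the projection $X\to \Z/(p^n)\times\Z/(p)$ and shifts the third coordinate by a quantity depending only on $(x,y)$, so $\mathcal{G}(X,r)$ lands in the group of such triangular bijections; the induced homomorphism $\phi$ has image inside your explicit group $H=\{\psi_{s,t,u}\}$ of order $p^{n+2}$ (your composition law and the identification $\bar\sigma_{(a,c)}=\psi_{-c,\pi(a),-1}$ check out), and the kernel embeds, via $\sigma\mapsto G^\sigma$, into the elementary function group $(\Z/(p^n))^{p^{n+1}}$, so $\mathcal{G}(X,r)$ is an extension of one finite $p$-group by another. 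The paper instead works inside $\mathcal{G}(X,r)$ itself: it introduces the subgroup $I=\gr(\sigma_{(a,b,c)}\sigma^{-1}_{(0,0,0)})$, proves it is normal with cyclic quotient generated by $\sigma_{(0,0,0)}I$ (using the brace identity $\sigma^{-1}_{(a,b,c)}\sigma_{(x,y,z)}=\sigma_{\sigma^{-1}_{(a,b,c)}(x,y,z)}\sigma^{-1}_{\sigma^{-1}_{(x,y,z)}(a,b,c)}$), and then shows by direct computation that every element of $I$ has order dividing $p^{2n}$ and that $\sigma_{(0,0,0)}^{p^{n+1}}=\id$. Your approach is more conceptual and avoids the explicit power computations, giving at once an embedding of $\mathcal{G}(X,r)$ into a concrete $p$-subgroup of $\Sym_X$ (bounded by $|H|\cdot p^{np^{n+1}}$); the paper's approach yields extra internal structure, namely the normal subgroup $I$ with cyclic $p$-quotient together with exponent bounds for its elements. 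The only point to make explicit in your write-up is that inverses of triangular bijections are again triangular (so that the induction over words in the generators and their inverses, and the definition of $\phi$ on all of $\mathcal{G}(X,r)$, are legitimate); this is immediate since the induced map on $(x,y)$ must be bijective, but it should be said.
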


\begin{proof}
Consider the subgroup $I=\gr(\sigma_{(a,b,c)}\sigma^{-1}_{(0,0,0)}\mid (a,b,c)\in X)$ of the permutation group $\mathcal{G}(X,r)$ of the above simple solution $(X,r)$ of the YBE.  Note  that
$$\sigma_{(a,b,c)}\sigma^{-1}_{(x,y,z)}=\sigma_{(a,b,c)}\sigma^{-1}_{(0,0,0)}(\sigma_{(x,y,z)}\sigma^{-1}_{(0,0,0)})^{-1} \in I$$ for all $(a,b,c),(x,y,z)\in X$.
Furthermore,
$$\sigma^{-1}_{(a,b,c)}\sigma_{(x,y,z)}=\sigma_{\sigma^{-1}_{(a,b,c)}(x,y,z)}\sigma^{-1}_{\sigma^{-1}_{(x,y,z)}(a,b,c)}$$
also lies in $I$. Then clearly
$$\sigma_{(x,y,z)}^{-1}\sigma_{(a,b,c)}\sigma^{-1}_{(0,0,0)}\sigma_{(x,y,z)}\in I.$$
Hence $I$ is a normal subgroup of $\mathcal{G}(X,r)$. It is also clear that  $G(X, r)/I$
is cyclic generated by $\sigma_{(0,0,0)}I$.

Moreover, the elements of $I$ have the form
$$\sigma_{(a_1,b_1,c_1)}\sigma^{-1}_{(0,0,0)}\sigma_{(a_2,b_2,c_2)}\sigma^{-1}_{(0,0,0)}\cdots \sigma_{(a_k,b_k,c_k)}\sigma^{-1}_{(0,0,0)},$$
for a positive integer $k$ and $(a_1,b_1,c_1),\dots ,(a_k,b_k,c_k)\in X$.
Since
$$\sigma_{(a,b,c)}\sigma^{-1}_{(0,0,0)}(x,y,z)=\sigma_{(a,b,c)}(x,y+\pi(x),z+\delta_{0,x}\delta_{0,y})=(x-c,y+\pi(a),z+\delta_{0,x}\delta_{0,y}-\delta_{a,x-c}\delta_{b,y+\pi(a)}),$$
we have that
\begin{eqnarray*}
	\lefteqn{\sigma_{(a_1,b_1,c_1)}\sigma^{-1}_{(0,0,0)}\cdots \sigma_{(a_k,b_k,c_k)}\sigma^{-1}_{(0,0,0)}(x,y,z)}\\
	&=(x-c_k-\dots -c_1,y+\pi(a_k+\dots +a_1),
	z+\delta_{0,x}\delta_{0,y}-\delta_{a_k,x-c_k}\delta_{b_k,y+\pi(a_k)}\\
	&\qquad +\dots
	+\delta_{0,x-c_k-\dots -c_2}\delta_{0,y+\pi(a_k+\dots +a_2)}-\delta_{a_1,x-c_k-\dots -c_1}\delta_{b_1,y+\pi(a_k+\dots +a_1)}).
\end{eqnarray*}
Hence
\begin{eqnarray*}
	\lefteqn{(\sigma_{(a_1,b_1,c_1)}\sigma^{-1}_{(0,0,0)}\cdots \sigma_{(a_k,b_k,c_k)}\sigma^{-1}_{(0,0,0)})^{p^n}(x,y,z)}\\
	&=(x,y,	z+g(a_1,\dots, a_k,b_1,\dots ,b_k,c_1,\dots ,c_k,x,y)),
\end{eqnarray*}
for some $g(a_1,\dots, a_k,b_1,\dots ,b_k,c_1,\dots ,c_k,x,y)\in \Z/(p^n)$. Note that for every positive integer $t$,
\begin{eqnarray*}
	\lefteqn{(\sigma_{(a_1,b_1,c_1)}\sigma^{-1}_{(0,0,0)}\cdots \sigma_{(a_k,b_k,c_k)}\sigma^{-1}_{(0,0,0)})^{tp^n}(x,y,z)}\\
	&=(x,y,	z+tg(a_1,\dots, a_k,b_1,\dots ,b_k,c_1,\dots ,c_k,x,y)).
\end{eqnarray*}
Hence
$$	(\sigma_{(a_1,b_1,c_1)}\sigma^{-1}_{(0,0,0)}\cdots \sigma_{(a_k,b_k,c_k)}\sigma^{-1}_{(0,0,0)})^{p^{2n}}=\id.$$
Therefore $I$ is a $p$-subgroup of $\mathcal{G}(X,r)$. Note that
$$\sigma^{-1}_{(0,0,0)}(x,y,z)=(x,y+\pi(x),z+\delta_{0,x}\delta_{0,y}).$$
Hence
$$(\sigma^{-1}_{(0,0,0)})^{p}(x,y,z)=(x,y,z+\delta_{0,x}\delta_{0,y}+\dots +\delta_{0,x}\delta_{0,y+\pi((p-1)x)}),$$
and thus
$$(\sigma^{-1}_{(0,0,0)})^{p^{n+1}}(x,y,z)=(x,y,z+p^n(\delta_{0,x}\delta_{0,y}+\dots +\delta_{0,x}\delta_{0,y+\pi((p-1)x)})).$$
Therefore
$$(\sigma^{-1}_{(0,0,0)})^{p^{n+1}}=\id,$$
and this implies that $\mathcal{G}(X,r)$ is a $p$-group.
\end{proof}

\section{Singular simple solutions of cardinality $p^{2n}$}\label{singular}

In \cite{Rump23} Rump introduced singular left braces.
\begin{definition}
	Let $B$ be a finite left brace. A prime divisor $p$ of $|B|$ is called singular if there exists a finite indecomposable solution $(X,r)$ of the YBE such that $B\cong\mathcal{G}(X,r)$ and $p$ is not a divisor of $|X|$. We call $B$ singular if $B$ admits a singular prime. In this case, we say that the solution $(X,r)$ is singular.
\end{definition}

In \cite{Rump23} Rump proved that among the left braces of cardinality less than $36$, there is a unique singular left brace, up to isomorphism, it has order $24$ and it is the permutation group of an indecomposable and irretractable solution of cardinality $8$. In \cite[Example 21]{Castelli}, it is proved that this singular solution has finite primitive level (see \cite{CO21}), in particular it is not simple. Then Castelli in \cite{Castelli} constructs an infinite family of singular, indecomposable and irretractable solutions of the YBE and all of them have finite primitive level. He asks whether there exist singular solutions which have no finite primitive level. We answer this question in affirmative in the following result.

\begin{theorem}
	Let $p$ be an odd prime number which is not a Fermat prime number. Let $n$ be a positive integer and let $q$ be an odd prime divisor of $p-1$. Then there exists a singular simple solution $(X,r)$ of cardinality $p^{2n}$. Furthermore, $|\mathcal{G}(X,r)|=p^mq$ for some positive integer $m$.
\end{theorem}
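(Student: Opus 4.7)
The plan is to construct the desired singular simple solution as a suitable twist of the simple solution of cardinality $p^{2n}$ from the Remark of Section~\ref{pn}, introducing an element of order $q$ into the permutation group while preserving simplicity. The starting point is that, since $q\mid p-1$ and $(\Z/(p^n))^*$ is cyclic of order $p^{n-1}(p-1)$, there exists $\omega\in(\Z/(p^n))^*$ of multiplicative order $q$. The hypothesis that $p$ is not a Fermat prime is precisely what guarantees the existence of such an odd prime divisor $q$.

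Setting $X=(\Z/(p^n))^2$, my approach is to define permutations $\sigma_{(a_1,a_2)}$ of $X$ by modifying the formulas of the Remark so as to incorporate multiplication by an appropriate power of $\omega$ in the second coordinate; the twist exponent will be allowed to depend on indicators of the form $\delta_{a_1,x+a_2}$ or on $a_1,a_2$ themselves, exactly as the additive terms $j_a$ did there. The precise twist must be chosen so that the cocycle identity $\sigma_x\sigma_{\sigma_x^{-1}(y)}=\sigma_y\sigma_{\sigma_y^{-1}(x)}$ remains valid, ensuring that the associated $r$ is a solution of the YBE.

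With such a construction in hand, there are four points to verify, each modelled on the corresponding step in the proof of Theorem~\ref{main}. First, \emph{indecomposability}, by exhibiting explicit products of $\sigma$-maps carrying $(0,0)$ to every element of $X$. Second, \emph{irretractability}, by distinguishing distinct $\sigma_{(a_1,a_2)}$ through the values of the indicator functions together with the twist exponent. Third, \emph{simplicity}, by showing that every epimorphism $f\colon(X,r)\to(Y,s)$ which is not an isomorphism must satisfy $|Y|=1$: indecomposability and \cite[Lemma 3.3]{CO23} force $|Y|$ to be a power of $p$, and the intermediate possibilities are ruled out by arguments closely parallel to those of Theorem~\ref{main}, using both the fibre counting of the form (\ref{set1}) and the additional information provided by the $\omega$-twist. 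Fourth, \emph{the order of the permutation group}: the subgroup $I=\gr(\sigma_{(a,b)}\sigma^{-1}_{(0,0)}\mid(a,b)\in X)$ is a normal $p$-subgroup, exactly as in the proposition following Theorem~\ref{main}, while the image of $\sigma_{(0,0)}$ in $\mathcal{G}(X,r)/I$ has order equal to $q$ times a power of $p$, the factor $q$ arising directly from the $\omega$-twist. Since $q\nmid p^{2n}=|X|$, this will show that $q$ is a singular prime of $\mathcal{G}(X,r)$ and hence that $(X,r)$ is a singular simple solution of cardinality $p^{2n}$ with $|\mathcal{G}(X,r)|=p^mq$.

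The main technical obstacle is to locate the precise form of the twist: the factor $\omega$ must be inserted in a way that does not destroy the delicate cancellation of Kronecker deltas that makes the cocycle identity work in Theorem~\ref{main}. Once this formula is in place, the verifications of YBE, indecomposability, irretractability and the computation of $|\mathcal{G}(X,r)|$ are essentially mechanical adaptations of the arguments already carried out in Section~\ref{pn}. The simplicity check, on the other hand, is expected to be the most laborious part, since the twist contributes extra terms to each of the expressions used to derive contradictions in the proof of Theorem~\ref{main}, and every such step must be reworked in the twisted setting.
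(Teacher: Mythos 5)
There is a genuine gap: your proposal never actually produces the solution. The entire content of the theorem is the existence of a concrete twisted construction, and you explicitly defer "the precise form of the twist" as an unresolved technical obstacle. This is not a routine detail. In the paper's proof the twist is the automorphism $t$ of $\Z/(p^n)$ of order $q$ (multiplication by an element of multiplicative order $q$), applied uniformly in the formula $\sigma_{(a_1,a_2)}(c_1,c_2)=(t(c_1)+a_2,\,t(c_2-j_{t(c_1)+a_2-a_1}))$, and the real work is the choice of the family $(j_a)$: one must pick orbit representatives $a_i$ of the $T$-orbits in such a way that $-a_i$ is again a representative, which is possible precisely because $q$ is odd (so $-a$ never lies in the $T$-orbit of $a$), and then set $j_0=1$, $j_{t^k(a_i)}=t^k(-1)+1$ to obtain the compatibility relations $j_{-a}=j_a$ and $j_{t^s(a)}-j_0=t^s(j_a-j_0)$. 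Your guess that the twist should enter as a variable exponent of $\omega$ governed by Kronecker deltas is different from this and is not shown to satisfy the cocycle identity; without a concrete formula nothing afterwards can be checked.

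A second problem is the route you propose for the remaining verifications. The paper does not re-run the fibre-counting argument of Theorem~\ref{main}; once the compatible family $(j_a)$ is in place it invokes the general machinery of \cite{CO24}: Theorem~3.1 there gives that $(X,r)$ is a solution, Theorem~3.5 gives simplicity via the criterion that the subgroups $V_a$ generated by differences $j_c-j_{c+t^z(a)}$ (and their iterates) equal $\Z/(p^n)$ — witnessed here by $j_0-j_{a_i}=1$ — and Section~4 of \cite{CO24} (using that $t-\id$ is invertible, which follows since $t(1)-1\notin p\Z/(p^n)$) identifies $\mathcal{G}(X,r)$ as an asymmetric product, yielding $|\mathcal{G}(X,r)|=p^mq$. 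Your plan to adapt the proof of Theorem~\ref{main} is misdirected: that argument lives on $\Z/(p^n)\times\Z/(p)\times\Z/(p^n)$ and exploits the specific shape of those $\sigma$-maps, while the untwisted $(\Z/(m))^2$ solution you start from has its simplicity established only by citation to \cite{CO21}, not by an argument you could mechanically perturb. So both the construction and the simplicity proof are missing, not merely laborious.
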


\begin{proof} Note that since $p$ is an odd prime and it is not a Fermat prime number, there exists an odd prime divisor $q$ of $p-1$.
	Since $q$ is a divisor of $p-1$, there exists $t\in\Aut(\Z/(p^n))$ of order $q$. Let $T=\gr(t)$. Note that the orbit of every nonzero element $a\in\Z/(p^n)$ by the action of $T$ has cardinality $q$. Let $a\in\Z/(p^n)$ be a nonzero element. Since $p$ is an odd prime, $a\neq -a$. Suppose that $-a$ is in the orbit of $a$ by the action of $T$. Since the orbit of $a$ has cardinality $q$, there exists an integer $1<k<q$ such that $t^k(a)=-a$. Then $t^{2k}(a)=t^k(-a)=-t^k(a)=a$. Hence $q$ is a divisor of $2k$.  Since $q$ is an odd prime, $q$ is a divisor of $k$ and we get a contradiction because $1<k<q$. Thus for every nonzero $a\in\Z/(p^n)$, $-a$ is not in the orbit of $a$ by the action of $T$. Let $I$ be the set of the orbits in $\Z/(p^n)$ by the action of $T$. We can choose an element $a_i$ of each orbit $i\in I$ in such a way that $-a_i=a_l$ for some $l\in I$. We define a family $(j_a)_{a\in\Z/(p^n)}$ of elements of $\Z/(p^n)$ as follows: $j_0=1$  and
	$$j_{t^k(a_i)}=t^k(-1)+1,$$
	for all $i\in I\setminus\{\{ 0 \}\}$ and all $k\in \Z$. In particular, $j_{a_{i}}=0$ for every $i\in I\setminus\{\{ 0 \}\}$.
Let $a\in \Z/(p^n)\setminus\{ 0\}$. There exist $i,l\in I$ and $k\in\Z$ such that $a=t^k(a_i)$ and $-a_i=a_l$. Now we have that
	$$j_{-a}=j_{-t^k(a_i)}=j_{t^k(-a_i)}=j_{t^k(a_l)}=t^k(-1)+1=j_{t^k(a_i)}=j_{a},$$
	and for every $s\in\Z$, we also have that
	$$j_{t^s(a)}-j_0=j_{t^{s+k}(a_i)}-j_0=t^{s+k}(-1)=t^s(j_{t^k(a_i)}-j_0)=t^s(j_{a}-j_0).$$
	Let $X=\Z/(p^n)\times \Z/(p^n)$ and let $r\colon X\times X\longrightarrow X\times X$ be the map defined by
	$$r((a_1,a_2),(c_1,c_2))=(\sigma_{(a_1,a_2)}(c_1,c_2),\sigma^{-1}_{\sigma_{(a_1,a_2)}(c_1,c_2)}(a_1,a_2)),$$
	where
	$$\sigma_{(a_1,a_2)}(c_1,c_2)=(t(c_1)+a_2, t(c_2-j_{t(c_1)+a_2-a_1}))$$
	for all $a_1,a_2,c_1,c_2\in\Z/(p^n)$. By \cite[Theorem 3.1]{CO24}, $(X,r)$ is a solution of the YBE.
	Let $a\in\Z/(p^n)$ be a nonzero element. Let $V_{a,1}=\gr(j_c-j_{c+t^z(a)}\mid c\in\Z/(p^n),\; z\in\Z)$. For every $m>1$, define $V_{a,m}=V_{a,m-1}+\gr(j_c-j_{c+v}\mid c\in\Z/(p^n),\; v\in V_{a,m-1})$. Then $V_a=\sum_{m=1}^{\infty}V_{a,m}$ is a subgroup of $\Z/(p^n)$.
	There exist $i\in I$ and $k\in\Z$ such that $a=t^k(a_i)$. Note that $1=1-0=j_0-j_{a_i}\in V_{a,1}\subseteq V_a$ because $a_i$ is in the $T$-orbit of $a$. Hence $V_a=\Z/(p^n)$, for all nonzero $a\in\Z/(p^n)$. By \cite[Theorem 3.5]{CO24}, the solution $(X,r)$ is simple.
	Clearly $|X|=p^{2n}$. Note that for every positive integer $m$
	$$\sigma_{(0,0)}^m(u,v)=(t^m(u),w),$$
	for some $w\in \Z/(p^n)$. Let $k$ be the order of $\sigma_{(0,0)}$. We have that
	$$(u,v)=\sigma^{k}_{(0,0)}(u,v)=(t^k(u),v).$$
	Hence $t^k(u)=u$, for all $u\in\Z/(p^n)$. Since $t$ has order $q$, we get that $q$ is a divisor of $k$. Therefore $q$ is a divisor of $|\mathcal{G}(X,r)|$. Since $q\neq p$, the solution $(X,r)$ is singular and $\mathcal{G}(X,r)$ is a singular left brace.
	
	Note that the automorphism $t$ is defined by the multiplication by the invertible element $t(1)\in\Z/(p^n)$ of multiplicative order $q$. Since the elements of the form $1+pz\in\Z/(p^n)$ have multiplicative order a power of $p$, we have that $t(1)-1\notin p(\Z/(p^n))$. Hence $t(1)-1$ is invertible in $\Z/(p^n)$. Therefore $t-\id\in\Aut(\Z/(p^n))$. By \cite[Section 4]{CO24}, this allows us to describe the left brace structure of $\mathcal{G}(X,r)$.  Namely, by \cite[Proposition 4.1, the comments before Lemma 4.2 and Theorem 4.5]{CO24}, we have that $\mathcal{G}(X,r)$ is the asymmetric product $\overline{H}\rtimes_{\circ}A_1$ of a trivial brace $\overline{H}$ of order a power of $p$ and the semidirect product $A_1=\Z/(p^n)\rtimes T$ of the trivial braces $\Z/(p^n)$ and $T$. In particular, $|\mathcal{G}(X,r)|=p^mq$, for some positive integer $m$.
\end{proof}

\vspace{30pt}
 \noindent \begin{tabular}{llllllll}
  F. Ced\'o && J. Okni\'{n}ski \\
 Departament de Matem\`atiques &&  Institute of
Mathematics \\
 Universitat Aut\`onoma de Barcelona &&   University of Warsaw\\
08193 Bellaterra (Barcelona), Spain    &&  Banacha 2, 02-097 Warsaw, Poland \\
 Ferran.Cedo@uab.cat && okninski@mimuw.edu.pl\\

\end{tabular}

\end{document}